\theoremstyle{plain}
	\newtheorem{theorem}{Theorem}
	\newtheorem{lemma}[theorem]{Lemma}
\theoremstyle{definition} 
	\newtheorem{definition}[theorem]{Definition}
\def\R{\mathbb{R}}
\newcommand{\norm}[1]{\| #1 \|}
\begin{document}
\title{The bounds of rigid sphere design}
\author[Y. Kamio]{Yuhi Kamio}
\address{College of Arts and Sciences, University of Tokyo, 3-8-1 Komaba, Meguro-ku, Tokyo 153-8914, Japan}
\email{emirp13@g.ecc.u-tokyo.ac.jp}

\date{\today}
\thanks{}
\subjclass{}

\begin{abstract}
We have identified some necessary conditions for the existence of rigid sphere designs. In particular, we have successfully resolved the conjecture proposed by \cite{bannai_rigid}; Given fixed positive integers $t$ and $d$, we show that there exist only finitely many rigid $t$-designs on $S^d$, up to orthogonal transformations.
\end{abstract}

\maketitle
\setcounter{tocdepth}{1}

\enlargethispage*{20pt}
\thispagestyle{empty}
We say a design $X$ is \emph{$m$-sized}, if $\#X =m$. 

The following concept was introduced implicitly by \cite{bannai_orbit} and explicitly by \cite{bannai_rigid}. 
\begin{definition}\label{rigid_design}
    We say an $m$-sized $t$-design $\{v_1,v_2,\ldots, v_m\}=X\subset S^d $ is \emph{rigid} if there exists a positive $\varepsilon$ that satisfies the following condition:
    \begin{quote}
        If an $m$-sized $t$-design $\{w_1,w_2,\ldots ,w_m\} = Y\subset S^d$ satisfies 
        $\norm{w_i-v_i}\leq \varepsilon $ for all $i$, then there exists a $\sigma \in SO(n)$ such that $\sigma v_i=w_i$ for all $i$.
    \end{quote} 
\end{definition}

\begin{lemma}\label{counting_connect}
Let $f_1,f_2,\ldots ,f_s\in \R[x_1,\ldots ,x_n]$ be polynomials whose degrees are at most $t$. Then, the number of isolated common real roots of $f_1, f_2,\ldots ,f_s$ is at most $t(2t-1)^{n-1}$. 
\end{lemma}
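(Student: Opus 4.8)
The plan is to reduce the count of isolated zeros to counting the critical points of a generic linear function on a smooth real hypersurface, and then to bound that count by the affine B\'ezout inequality (which says that $n$ polynomial equations of degrees $d_1,\dots,d_n$ have at most $d_1\cdots d_n$ isolated common complex zeros). First I would replace the system by the single polynomial $g := f_1^{2}+\dots+f_s^{2}\in\R[x_1,\dots,x_n]$, of degree $D\le 2t$; it is nonnegative, has the same real zero locus $V$ as $f_1,\dots,f_s$, and every isolated point of $V$ is a strict local minimum of $g$ at height $0$. Fix an arbitrary finite collection $p_1,\dots,p_N$ of distinct isolated points of $V$; it suffices to show $N\le t(2t-1)^{n-1}$. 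Choose pairwise disjoint closed balls $B_j\ni p_j$ on which $p_j$ is the only zero of $g$ and $g\ge m_j>0$ on $\partial B_j$, and, by Sard's theorem, a regular value $\varepsilon$ of $g$ with $0<\varepsilon<\min_j m_j$. Then $M:=g^{-1}(\varepsilon)$ is a smooth hypersurface, and $K_j:=\{g\le\varepsilon\}\cap B_j$ is a compact neighborhood of $p_j$ lying in the interior of $B_j$, with topological boundary contained in $M$ and outward unit normal $\nabla g/|\nabla g|$ along it; the $K_j$ are pairwise disjoint.

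Next I would fix a generic unit vector $u$ (in particular with $u_n\ne 0$), put $\ell(x):=\langle u,x\rangle$, and note that $\ell$ attains its maximum over the compact set $K_j$ at a point $q_j^{+}\in\partial K_j\subset M$ and its minimum at $q_j^{-}\in\partial K_j\subset M$. Both are critical points of $\ell|_M$, and there $\nabla g$ is a nonzero multiple of $u$, say $\nabla g(q_j^{\pm})=\lambda_j^{\pm}u$; moreover $\lambda_j^{+}>0$ and $\lambda_j^{-}<0$, because at a maximum (resp.\ minimum) of $\ell$ over $K_j$ the vector $u$ must point out of (resp.\ into) $K_j$. Since the $K_j$ are disjoint, this exhibits $N$ distinct critical points of $\ell|_M$ with $\lambda>0$ and $N$ further distinct ones with $\lambda<0$; and as $\varepsilon$ is a regular value, every critical point $q$ of $\ell|_M$ has $\nabla g(q)=\lambda u$ with $\lambda\ne 0$, hence lies in exactly one of these two families. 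Therefore $2N$ is at most the total number of critical points of $\ell|_M$.

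To bound that total, observe that the critical points of $\ell|_M$ are exactly the solutions of the system formed by $g-\varepsilon=0$ together with the $n-1$ equations $u_n\partial_i g-u_i\partial_n g=0$ $(1\le i\le n-1)$ that encode $\nabla g\parallel u$; these have degrees at most $D,\,D-1,\dots,D-1$. For generic $u$ this system is zero-dimensional, so affine B\'ezout bounds the number of its complex solutions --- and hence of its real ones --- by $D(D-1)^{n-1}\le 2t(2t-1)^{n-1}$. Combined with the previous paragraph this gives $2N\le 2t(2t-1)^{n-1}$, i.e.\ $N\le t(2t-1)^{n-1}$, as desired.

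The step I expect to be the crux is producing the factor $\tfrac12$: estimating directly through $M=g^{-1}(\varepsilon)$ would give at best $N\le D(D-1)^{n-1}$ with $D=2t$, i.e.\ roughly twice the target, since $g$ has degree $2t$ rather than $t$; the missing factor is recovered precisely by the observation that a strict local minimum forces critical points of $\ell|_M$ of both signs of $\lambda$. The remaining technical point is the genericity of $u$: one must verify, by a routine transversality and upper-semicontinuity argument (after also taking $\varepsilon$ to be a regular value of $g$ over $\mathbb{C}$), that for almost every $u$ the Lagrange-type system above has only finitely many complex solutions, so that affine B\'ezout applies to it. Noncompactness of $M$ causes no trouble, as only the compact pieces $K_j$ are used, and the case $n=1$ is a reassuring sanity check ($g-\varepsilon$ has at most $2t$ roots, giving $N\le t$). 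Alternatively, the lemma is a consequence of the Oleinik--Petrovsky--Thom--Milnor bound on the Betti numbers of a real algebraic set defined by polynomials of degree $\le t$, but the argument sketched above is essentially self-contained.
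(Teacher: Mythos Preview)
Your argument is correct. The paper's own proof is a one-line citation: the lemma is stated as a special case of Milnor's bound on the Betti numbers of a real algebraic variety (\emph{On the Betti numbers of real varieties}, 1964), and no further details are given. What you have written is, essentially, Milnor's original proof specialised to the zeroth Betti number: form the sum of squares $g=\sum f_i^2$, pass to a nearby smooth level set $g^{-1}(\varepsilon)$, count critical points of a generic linear functional via the Lagrange system, and apply B\'ezout. The key observation that each isolated zero of $g$ forces critical points of \emph{both} signs of $\lambda$ is exactly how Milnor recovers the factor~$\tfrac12$ lost by squaring (his formulation is that the level set near a strict local minimum bounds a region, so a height function has both a max and a min there). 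You even flag the Oleinik--Petrovsky--Thom--Milnor bound explicitly at the end, so you are aware that your self-contained argument and the paper's citation are two packagings of the same idea. The trade-off is clear: the paper's route is a single reference, while yours makes the note independent of that reference at the cost of the genericity/zero-dimensionality verification you mention, which is routine but does need to be said.
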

\begin{proof}
This is a special case of \cite{milnor_betti}.
\end{proof}

\begin{theorem}
Let \(X \subset S^d\) be an \(n\)-sized rigid \(t\)-design. Let $t'$ be $max(t,2)$ and $k$ be $(d+1)(n-d-1)$. Then, the following inequality holds:
\[ t'(2t'-1)^{k-1} \geq (n-d-1)! \]

In particular, for fixed \(t\) and \(d\), there are no rigid \(n\)-sized \(t\)-designs in \(S^d\) for large \(n\). Furthermore, there exists only a finite number of rigid \(t\)-designs in \(S^d\) up to orthogonal transformations.

\end{theorem}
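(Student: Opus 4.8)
The plan is to destroy the continuous $SO(d+1)$-symmetry by pinning a linear basis among the points of $X$, recognize the remaining configuration as an isolated real solution of a polynomial system of degree $\le t'$ in $k$ variables, exploit the residual symmetry under permuting the other $n-d-1$ points to manufacture $(n-d-1)!$ distinct such solutions, and finish with \cref{counting_connect}. A preliminary remark: for $t\ge 2$ the set $X$ cannot lie in any linear hyperplane $\{\langle a,x\rangle=0\}$, since then the degree-$2$ polynomial $\langle a,x\rangle^2$ would sum to $0$ over $X$ while having a strictly positive average on $S^d$; hence $X$ spans $\R^{d+1}$, and in particular $n\ge d+2$ and $k=(d+1)(n-d-1)\ge 1$. (The case $t=1$, and the handful of small $n$, are addressed at the end.) Reindexing the points, I may assume $v_1,\dots,v_{d+1}$ is a basis of $\R^{d+1}$.

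Next I would encode the $t$-design condition as a polynomial system on $(S^d)^n$: the sphere equations $\norm{w_i}^2=1$, together with $\sum_{i=1}^n Y(w_i)=0$ for $Y$ ranging over a basis of the spherical harmonics of degrees $1,\dots,t$; these have degrees $\le 2$ and $\le t$ respectively, hence $\le t'$. Substituting the constants $w_i=v_i$ for $i\le d+1$ leaves a system $P_1=\cdots=P_s=0$ of degree $\le t'$ in the $k$ coordinates of the free points $w_{d+2},\dots,w_n$, whose real solution set $Z\subset\R^k$ consists precisely of the tuples completing $(v_1,\dots,v_{d+1})$ to a $t$-design. I claim $p:=(v_{d+2},\dots,v_n)$ is an isolated point of $Z$: a solution of $Z$ sufficiently near $p$ completes $(v_1,\dots,v_{d+1})$ to an $n$-sized $t$-design close to $X$, which by rigidity equals $\sigma X$ for some $\sigma\in SO(d+1)$; comparing the first $d+1$ coordinates shows $\sigma$ fixes the spanning set $v_1,\dots,v_{d+1}$ pointwise, so $\sigma=\mathrm{id}$ and the solution is $p$.

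For every permutation $\pi$ of $\{d+2,\dots,n\}$, the corresponding permutation of the coordinate blocks of $\R^k$ maps $Z$ bijectively onto itself — it permutes the sphere equations among themselves and fixes each moment equation $\sum_{i=d+2}^n Y(w_i)=-\sum_{i=1}^{d+1}Y(v_i)$, which is symmetric in $w_{d+2},\dots,w_n$ — and, being a linear automorphism, carries isolated points to isolated points. Since the $v_i$ are pairwise distinct ($X$ is $n$-sized), the $(n-d-1)!$ images of $p$ are distinct isolated real common zeros of $P_1,\dots,P_s$. Applying \cref{counting_connect} with its number of variables equal to $k$ and its degree bound equal to $t'$ then bounds the number of such zeros by $t'(2t'-1)^{k-1}$, which yields $t'(2t'-1)^{k-1}\ge(n-d-1)!$.

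For the remaining assertions: with $t$ and $d$ fixed the right-hand side grows faster than exponentially in $n$, whereas the left-hand side is merely exponential in $n$ (base $(2t'-1)^{d+1}$), so the inequality forces $n$ below some explicit $N(t,d)$ — there are no rigid $n$-sized $t$-designs on $S^d$ for large $n$. For each fixed $n$, the tuples in $(S^d)^n$ obeying the $t$-design moment equations form a real algebraic set, which has finitely many connected components; rigidity of $X$ says exactly that the orbit $SO(d+1)\cdot X$ is open in this set (by rigidity together with homogeneity) and closed (it is the continuous image of the compact group $SO(d+1)$), hence is one of those components, and inequivalent rigid $n$-designs occupy distinct components. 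Thus there are finitely many rigid $n$-sized $t$-designs up to $SO(d+1)$ for each $n\le N(t,d)$ and none beyond, giving the claimed finiteness up to orthogonal transformations (passing from $SO(d+1)$ to $O(d+1)$ only shrinks the count). The case $t=1$ fits in: a $1$-design spanning $\R^{d+1}$ again satisfies $n\ge d+2$ and the argument runs verbatim (the relations $\sum w_i=0$ have degree $\le t'=2$), while a non-spanning rigid $1$-design contained in a subspace $\R^j$ restricts to a rigid $1$-design on $S^{j-1}$, so induction on the dimension handles it. I expect the real content to be concentrated in the middle steps — checking that pinning a spanning $(d+1)$-subset simultaneously collapses the local geometry of the design variety to a single point (so that rigidity becomes ``isolated zero of a polynomial system'') and leaves the $S_{n-d-1}$-action available, so that the combinatorial factor $(n-d-1)!$ can be fed to Milnor's bound; the surrounding steps are routine.
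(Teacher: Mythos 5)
Your main argument is essentially the paper's: pin $d+1$ of the points, encode the remaining configuration as a common real zero of polynomials of degree at most $t'$ in $k=(d+1)(n-d-1)$ variables, use rigidity (as in \cref{rigid_design}) to show this zero is isolated, use the symmetry under permuting the remaining $n-d-1$ points to produce $(n-d-1)!$ distinct isolated zeros, and apply \cref{counting_connect}. Two differences are worth recording. First, for the final statement the paper simply cites \cite{lyubich_iff}, whereas you argue directly that for each fixed $n$ the $SO(d+1)$-orbit of a rigid design tuple is open (rigidity plus homogeneity) and closed in the real algebraic set of design tuples, hence a connected component, of which there are finitely many; this is a correct, self-contained replacement for the citation. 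Second, you insist that the pinned points $v_1,\dots,v_{d+1}$ be a basis of $\R^{d+1}$, which forces the spanning lemma for $t\ge 2$ and a separate disposal of $t=1$ and non-spanning designs (also note that spanning by itself gives only $n\ge d+1$, not $n\ge d+2$ as you assert in passing). The paper sidesteps all of this: it only requires $v_1,\dots,v_{d+1}$ to span $V=\mathrm{span}(X)$, and then $\sigma v_i=v_i$ for $i\le d+1$ already yields $\sigma|_V=\mathrm{id}$, hence $\sigma v_i=v_i$ for every $i$, which is all the isolation step needs. Your ``induction on the dimension'' for a non-spanning rigid $1$-design is the one genuinely under-justified spot: the inequality for $S^{j-1}$, namely $t'(2t'-1)^{j(n-j)-1}\ge (n-j)!$, does not formally imply the stated one for $S^d$, since when $d+1>n/2$ the exponent $(d+1)(n-d-1)$ can be strictly smaller than $j(n-j)$; you would need to add a separate (easy) estimate showing the target inequality holds outright when $n-d-1\le d+1$, or better, adopt the paper's span-of-$V$ normalization and drop the case distinction altogether.
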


\begin{proof}
Assume $X=\{v_1,v_2,\ldots, v_n\}$ is an $n$-sized rigid $t$-design of $S^d$. Let $V$ be the linear span of $X$. Without loss of generality, we can assume the linear span of $v_1,v_2,\ldots ,v_{d+1}$ is $V$. 
Let $a_{i,j}$ be the $j$-th coordinate of $v_i$. We define polynomials $f_{d+2},f_{d+3}\ldots ,f_n,g_1,\ldots ,g_m\in \mathbb{R}[x_{d+2,1},\ldots ,x_{n,d+1}]$ as follows. Here, $m=\binom{t+d+1}{d+1}$ and $P_1,P_2,\ldots, P_m$ are all monic monomials of $\R[x_1,\ldots, x_{d+1}]$ whose degrees are less than $t$; 

\[
f_i=\sum_{j=1}^d x_{i,j}^2-1 
\]
\[
g_s=\sum_{j=1}^{d+1}P_s(a_{j,1},a_{j,2},\ldots a_{j,d+1})+\sum_{j=d+2}^n P_s(x_{j,1},x_{j,2},\ldots x_{j,d+1})-\frac{\int_{S^d} P_s(y) dy}{\int_{S^d} 1 dy}.
\]
As $X$ is a $t$-design, $(a_{i,j})_{d+2\leq i\leq n,1\leq j\leq d+1}$ is a common root of $f_i$ and $g_s$. Actually, $(a_{i,j})_{d+2\leq i\leq n,1\leq j\leq d+1}$ is an isolated common root of $f_i$ and $g_s$. We will prove this.

We take $\varepsilon$ of \cref{rigid_design} for $X$. We can assume that $2\varepsilon<\min\{d(x,y)|x,y\in X,x\neq y\}$. 
Assume $(b_{i,j})_{d+2\leq i\leq n,1\leq j\leq d+1}$ is a common root of $f_i$ and $g_s$ and $\lvert b_{i,j}-a_{i,j}\rvert <\frac{\varepsilon}{\sqrt{d+1}}$. Let $w_i\in S^d$ be $(b_{i,j})_{1\leq j\leq d+1}$ for $i\geq d+2$, and $v_i$ for $i\leq d+1$.

Then, $Y=\{w_1,w_2,\ldots ,w_n\}$ is also a $t$-design and $\norm{v_i-w_i}<\varepsilon$. 
As $\varepsilon$ satisfies the condition of \cref{rigid_design}, there exists $\sigma$ such that $w_i =\sigma v_i$. In particular, $\sigma v_i=v_i$ for $i\leq d+1$. As $v_1,\ldots ,v_{d+1}$ generate $V$, $\sigma|_V =\mathrm{id}$. Therefore, we have $b_{i,j}=a_{i,j}$.
In summary, $(a_{i,j})_{d+2\leq i\leq n,1\leq j\leq d+1}$ is an isolated common root of $f_i$ and $g_s$. By symmetry, for all bijections $\tau:\{d+2,\ldots,n\}\to \{d+2,\ldots,n\}$, $(a_{\tau(i),j})_{d+2\leq i\leq n,1\leq j\leq d+1}$ are also isolated common roots. Therefore, there exist at least $(n-d-1)!$ isolated common roots of $f_i$ and $g_s$. By \cref{counting_connect}, we obtain the desired inequality.

If we fix $t$ and $d$, the inequality of this theorem does not hold for large $n$. This proves the second statement. The equivalence of the second and third statements was proved in \cite{lyubich_iff}. 
\end{proof}

\subsection*{Acknowledgement}
We would like to thank Eiichi Bannai for reviewing our paper and providing valuable comments. 

\printbibliography
\end{document}